\begin{document}

\newtheorem{theorem}{Theorem}
\newtheorem{proposition}[theorem]{Proposition}
\newtheorem{question}[theorem]{Question}
\newtheorem{definition}[theorem]{Definition}
\newtheorem{corollary}[theorem]{Corollary}
\newtheorem{lemma}[theorem]{Lemma}
\newtheorem{claim}[theorem]{Claim}
\newtheorem{example}[theorem]{Example}

\newcommand{\pr}[1]{\left\langle #1 \right\rangle}
\newcommand{\mH}{\mathcal{H}}
\newcommand{\mK}{\mathcal{K}}
\newcommand{\mR}{\mathcal{R}}
\newcommand{\mG}{\mathcal{G}}
\newcommand{\mA}{\mathcal{A}}
\newcommand{\mV}{\mathcal{V}}
\newcommand{\mU}{\mathcal{V}}
\newcommand{\mP}{\mathcal{P}}
\newcommand{\mB}{\mathcal{B}}
\newcommand{\C}{\mathrm{C}}
\newcommand{\mO}{\mathcal{O}}
\newcommand{\mC}{\mathcal{C}}
\newcommand{\D}{\mathrm{D}}
\renewcommand{\O}{\mathrm{O}}
\newcommand{\K}{\mathrm{K}}
\newcommand{\OD}{\mathrm{OD}}
\newcommand{\Do}{\D_\mathrm{o}}
\newcommand{\sone}{\mathsf{S}_1}
\newcommand{\gone}{\mathsf{G}_1}
\newcommand{\sfin}{\mathsf{S}_\mathrm{fin}}
\newcommand{\gfin}{\mathsf{G}_\mathrm{fin}}
\newcommand{\gn}[1]{\mathsf{G}_\mathrm{#1}}
\newcommand{\Em}{\longrightarrow}
\newcommand{\menos}{{\setminus}}
\newcommand{\w}{{\omega}}
\newcommand{\1}{\textsc{Alice}}
\newcommand{\2}{\textsc{Bob}}
\newcommand{\seq}[1]{{\langle {#1} \rangle}}

\title[A note on rank 2 diagonals]
{A note on rank 2 diagonals}
\author[A. Bella]{Angelo Bella}
\address{Dipartimento di Matematica e Informatica, University of Catania,
Citt\`a Universitaria, Viale A. Doria 6, 95125 Catania, Italy}
\email{bella@dmi.unict.it}

\author[S. Spadaro]{Santi Spadaro}
\address{Dipartimento di Matematica e Informatica, University of Catania,
Citt\`a Universitaria, Viale A. Doria 6, 95125 Catania, Italy}
\email{bella@dmi.unict.it}

\begin{abstract} We solve two questions regarding spaces with a ($G_\delta$)-diagonal of rank 2. One is a question of Basile, Bella and Ridderbos regarding weakly Lindel\"of spaces with a $G_\delta$-diagonal of rank 2 and the other is a question of Arhangel'skii and Bella asking whether every space with a diagonal of rank 2 and cellularity continuum has cardinality at most continuum.
\end{abstract}

\subjclass[2010]{54D10, 54A25.}
\keywords{cardinality bounds, weakly Lindel\"of, $G_\delta$-
diagonal, neighbourhood assignment,  dual properties.}
\maketitle
\smallskip

\section{Introduction}

A space is said to have a $G_\delta$-diagonal if its diagonal can be written as the intersection of a countable family of open subsets in the square. This notion is of central importance in metrization theory, ever since Sneider's 1945 theorem \cite{Sn} stating that every compact Hausdorff space with a $G_\delta$ diagonal is metrizable. Sneider's result was later improved by Chaber \cite{Ch} who proved that every countably compact space with a $G_\delta$ diagonal is compact and hence metrizable.

Around the same time, Ginsburg and Woods \cite{GW} showed the influence of $G_\delta$ diagonals in the theory of cardinal invariants for topological spaces by proving that every space with a $G_\delta$ diagonal without uncountable closed discrete sets has cardinality at most continuum. Their result led them to conjecture that every ccc space with a $G_\delta$ diagonal must have cardinality at most continuum. Shakhmatov \cite{Sh} and Uspenskii \cite{U} gave a pretty strong disproof to this conjecture by constructing Tychonoff ccc spaces with a $G_\delta$-diagonal of arbitrarily large cardinality. However, in the meanwhile, several strengthenings of the notion of a $G_\delta$-diagonal had been introduced, leading several researchers to test Ginsburg and Woods's conjecture against these stronger diagonal properties. That culminated in Buzyakova's surprising result \cite{Bu} that a ccc space with a regular $G_\delta$-diagonal has cardinality at most continuum. A space has a \emph{regular $G_\delta$-diagonal} if there is a countable family of neighbourhoods of the diagonal in the square such that the diagonal is the intersection of their closures.

Another way of strengthening the property of having a $G_\delta$ diagonal is by considering the notion of rank. Recall that given a family $\mathcal{U}$ of subsets of a topological space and a point $x \in X$, $St(x,\mathcal{U}):=\bigcup \{U \in \mathcal{U}: x \in U \}$. The set $St^n(x,\mathcal{U})$ is defined by induction as follows: $St^1(x, \mathcal{U})=St(x, \mathcal{U})$ and $St^n(x,\mathcal{U})=\bigcup \{U \in \mathcal{U}: U \cap St^{n-1}(x, \mathcal{U}) \neq \emptyset \}$ for every $n>1$. A space is said to have a diagonal of rank $n$ if there is a sequence $\{\mathcal{U}_k: k < \omega \}$ of open covers of $X$ such that $\bigcap \{St^n(x, \mathcal{U}_k): k < \omega\}=\{x\}$, for every $x \in X$. By a well-known characterization, having a diagonal of rank 1 is equivalent to having a $G_\delta$-diagonal. Note also that a space with a $G_\delta$-diagonal of rank $2$ is necessarily $T_2$.

Zenor \cite{Ze} observed that every space with a diagonal of rank 3 also has a regular $G_\delta$-diagonal so by Buzyakova's result, every ccc space with a diagonal of rank 3 has cardinality at most continuum. In \cite{bella}, the first author proved the stronger result that every ccc space with a $G_\delta$-diagonal of rank $2$ has cardinality at most $2^\omega$. The following question is still open though:

\begin{question}
(Arhangel'skii and Bella \cite{AB}) Is every regular $G_\delta$-diagonal always of rank 2?
\end{question}

A positive answer would lead to a far-reaching generalization of Buzyakova's cardinal bound.

Arhangel'skii and the first-named author proved in \cite{AB} that every space with a diagonal of rank 4 and cellularity $\leq \mathfrak{c}$ has cardinality at most continuum, and leave open whether this is also true for spaces with a diagonal of rank 2 or 3. 

\begin{question}
Let $X$ be a space with a diagonal of rank 2 or 3 and cellularity at most $\mathfrak{c}$. Is it true that $|X| \leq \mathfrak{c}$.
\end{question}

From Proposition 4.7 of \cite{BBR} it follows that $|X| \leq c(X)^\omega$ for every space $X$ with a diagonal of rank 3, which in turn that the answer to Arhangel'skii and Bella's question is yes for spaces with a diagonal of rank 3. We show that the answer to their question is no for spaces with a diagonal of rank 2, by constructing a space with a diagonal of rank 2, cellularity $\leq \mathfrak{c}$ and cardinality larger than the continuum. That leads to a complete solution to Arhangel'skii and Bella's question.

Recall that space $X$ is \emph{weakly Lindel\"of} provided that every open cover has a countable subfamily whose union is dense in $X$.  This notion is a common generalisation of the Lindel\"of property and the countable chain condition (ccc). In view of the results by Ginsburg-Woods and Bella mentioned above it is natural to consider the following question:

\begin{question} \label{q} \cite{BBR}  
Let $X$ be a weakly Lindel\"of space with a $G_\delta$-diagonal of rank $2$. Is it true that $|X| \leq 2^\omega$?
\end{question}

The above question was explicitly formulated in \cite{BBR} and two partial positive answers were obtained there under the assumptions that the space is either Baire or has a rank $3$ diagonal. Here we will prove that Question $\ref{q}$ has a positive answer assuming that the space is normal.

All undefined notions can be found in \cite{eng}.

\section{Spaces with a diagonal of rank 2}

Recall that a neighbourhood assignment for a space $X$ is a
function $\phi$ from $X$  to  its topology such that $x\in
\phi(x)$ for every $x\in X$. A set $Y\subseteq X$ is a \emph{kernel} for
$\phi$ if $X=\bigcup\{\phi(y):y\in Y\}$. Following
\cite{vanmill},   we say that a space $X$ is dually $\mathcal P$
if   every neighbourhood assignment  in $X$ has a kernel $Y$
satisfying the property $\mathcal P$.  Of course,   $\mathcal P$
implies  dually $\mathcal P$. A dually ccc space may fail to be
even weakly Lindel\"of.

Here we need  the countable version of a well-known result of
Erd\" os and Rado:
\begin{lemma} \label{erdos}  Let $X$ be a set with
$|X|>2^\omega$. If $[X]^2=\bigcup\{P_n:n<\omega\}$, then there
exist an uncountable set $S\subseteq X$ and an integer $n_0\in
\omega$ such that $[S]^2 \subseteq P_{n_0}$. \end{lemma}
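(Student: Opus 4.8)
The plan is to recognize the statement as the countable instance of the Erd\H{o}s--Rado partition relation $(2^\omega)^+ \to (\omega_1)^2_\omega$ and to prove it by the classical ramification (tree-of-reservoirs) method. First I would fix, for each pair $\{x,y\} \in [X]^2$, a color $c(\{x,y\}) \in \omega$ with $\{x,y\} \in P_{c(\{x,y\})}$, turning the partition into a coloring $c \colon [X]^2 \to \omega$. The target is an uncountable set that is \emph{end-homogeneous}: a sequence $\langle p_\alpha : \alpha < \omega_1 \rangle$ of distinct points such that for each $\alpha$ the color $c(\{p_\alpha, p_\beta\})$ is the same for all $\beta > \alpha$. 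Given such a sequence, the assignment sending $\alpha$ to that common color maps $\omega_1$ into $\omega$, so some fiber is uncountable (an $\omega_1$-indexed union of countably many countable sets cannot exhaust $\omega_1$); the subsequence $S$ indexed by that fiber then satisfies $[S]^2 \subseteq P_{n_0}$ for the relevant $n_0$, which is exactly the conclusion.

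To build the end-homogeneous sequence I would ramify $X$ into a tree of ``reservoirs'' indexed by color-sequences $t \in \omega^{<\omega_1}$. Put $X_{\emptyset} = X$; at a node $t$ with $X_t \neq \emptyset$, choose a center $p_t \in X_t$ and split the rest by color, setting $X_{t^\frown j} = \{ x \in X_t \setminus \{p_t\} : c(\{p_t, x\}) = j \}$ for each $j < \omega$; at a limit stage take intersections along the branch, $X_s = \bigcap \{ X_{s \upharpoonright \alpha} : \alpha < \operatorname{dom}(s) \}$. The point of this bookkeeping is that the centers along any single branch $b$ are automatically end-homogeneous: if $\alpha < \beta$ then $p_{b \upharpoonright \beta}$ lies in $X_{(b \upharpoonright \alpha)^\frown b(\alpha)}$, so $c(\{p_{b \upharpoonright \alpha}, p_{b \upharpoonright \beta}\}) = b(\alpha)$ depends only on $\alpha$. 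Thus it suffices to produce a single branch of length $\omega_1$ along which every reservoir is nonempty.

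The crux --- and the step I expect to be the main obstacle --- is exactly the survival of a branch through the countable limit levels, where the naive hope that a ``large'' reservoir stays large fails because intersecting countably many large sets can leave nothing behind. The resolution is a counting argument rather than a size-preservation argument. I would observe that each point $x \in X$ follows a unique branch $b_x$ (at a node $t$ it passes to the child indexed by $c(\{p_t, x\})$), and that $x$ stays in every reservoir along $b_x$ up to and including the level at which it is itself chosen as a center; in particular, if $x$ is never chosen as a center then $x \in X_{b_x \upharpoonright \alpha}$ for all $\alpha < \omega_1$ (including limits, since $x$ lies in each set being intersected), so $b_x$ is a branch of length $\omega_1$ with all reservoirs nonempty and all centers defined. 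Finally I would bound the number of centers: each level $\alpha < \omega_1$ carries at most $2^\omega$ nodes (there are at most $\omega^{|\alpha|} \le 2^\omega$ sequences in $\omega^\alpha$), hence the whole tree has at most $\omega_1 \cdot 2^\omega = 2^\omega$ nodes and so at most $2^\omega$ centers; since $|X| > 2^\omega$, some point is never a center, yielding the desired branch. Combining this with the pigeonhole step of the first paragraph produces the uncountable $S$ and the integer $n_0$ with $[S]^2 \subseteq P_{n_0}$.
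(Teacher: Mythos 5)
Your argument is correct, but note that the paper does not prove this lemma at all: it is stated as ``the countable version of a well-known result of Erd\H{o}s and Rado'' (the partition relation $(2^\omega)^+\to(\omega_1)^2_\omega$) and simply cited. What you have written is a complete, self-contained proof by the standard ramification method, and all the delicate points are handled properly: the reservoirs shrink to $X_{t^\frown j}\subseteq X_t\setminus\{p_t\}$ so the centers along a branch are distinct and end-homogeneous with colour $b(\alpha)$; a point that is never chosen as a centre survives every level of its own branch, including limit levels, so the branch has length $\omega_1$; and the node count $\omega_1\cdot 2^\omega=2^\omega<|X|$ guarantees such a point exists. The final pigeonhole step is also sound, resting on the regularity of $\omega_1$ (a countable union of countable sets cannot be $\omega_1$), though your parenthetical phrasing of it is slightly garbled. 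In short, you supply a proof where the authors supply a reference; for the purposes of the paper either suffices, but your version makes the note self-contained.
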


\begin{theorem} \label{1} If $X$ is a dually weakly Lindel\"of
normal space with a $G_\delta$-diagonal of rank $2$, then $|X|\le
2^\omega$. \end{theorem}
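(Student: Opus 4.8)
The plan is to argue by contradiction: assume $|X|>2^\omega$ and extract from the rank $2$ diagonal an uncountable family of pairwise disjoint open sets that the combination of normality and (dual) weak Lindel\"ofness cannot tolerate. The first move is to feed the diagonal into Lemma \ref{erdos}. The key observation is that for an open cover $\mathcal{U}_n$ and distinct $x,y$ one has $y\notin St^2(x,\mathcal{U}_n)$ if and only if $St(x,\mathcal{U}_n)\cap St(y,\mathcal{U}_n)=\emptyset$: if some member containing $y$ met $St(x,\mathcal{U}_n)$ then $y\in St^2(x,\mathcal{U}_n)$, and conversely such a member lies in $St(y,\mathcal{U}_n)$. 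Hence, putting $P_n=\{\{x,y\}\in[X]^2: St(x,\mathcal{U}_n)\cap St(y,\mathcal{U}_n)=\emptyset\}$ and using that the diagonal has rank $2$, we get $[X]^2=\bigcup_{n<\omega}P_n$. As $|X|>2^\omega$, Lemma \ref{erdos} yields an uncountable $S$ and an index $m$ with $[S]^2\subseteq P_m$; shrinking, take $|S|=\omega_1$. Then the stars $\{St(x,\mathcal{U}_m):x\in S\}$ are pairwise disjoint, and choosing $U_x\in\mathcal{U}_m$ with $x\in U_x\subseteq St(x,\mathcal{U}_m)$ produces an uncountable pairwise disjoint family $\{U_x:x\in S\}$. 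This is the sole place the rank $2$ hypothesis is used: rank $1$ would only guarantee that each member of $\mathcal{U}_m$ meets $S$ in at most one point, whereas rank $2$ gives genuinely disjoint stars.

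The remainder of the plan is to turn this cellular family into a contradiction. The guiding principle is the elementary fact that a weakly Lindel\"of space admits no uncountable \emph{locally countable} family of nonempty open sets: covering the space by neighbourhoods that each meet only countably many members and extracting a countable subfamily with dense union, that union would meet only countably many members yet must meet every one of them. Pairwise disjointness alone does not suffice here; indeed the one-point Lindel\"ofication of an uncountable discrete space is normal and Lindel\"of yet has uncountable cellularity. What is needed is local countability, equivalently the absence of \emph{condensation points} of the family (points each of whose neighbourhoods meets uncountably many members). So the real task is to pass from ``pairwise disjoint'' to ``locally countable,'' and this is where normality and the dual hypothesis are spent.

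To bring in dual weak Lindel\"ofness I would design a neighbourhood assignment adapted to the family: send every point lying in a (necessarily unique) star $St(x,\mathcal{U}_m)$ to that star, and send every point off $G:=\bigcup_{x\in S}St(x,\mathcal{U}_m)$ into a neighbourhood meeting as few of the $U_x$ as possible. The point of this is that all condensation points of the star family lie in the closed set $X\setminus G$, since any point of a star has that very star as a neighbourhood meeting exactly one member. One then takes a weakly Lindel\"of kernel $Y$ for the assignment, and the aim is to show that $Y$ meets uncountably many of the disjoint stars, so that $\{St(x,\mathcal{U}_m)\cap Y:x\in S\}$ is an uncountable family of relatively open subsets of the weakly Lindel\"of space $Y$; normality of $X$ is invoked to separate the closed condensation set $X\setminus G$ from the open region carrying the family and, using the freedom in the assignment off $G$, to ensure that no condensation point survives inside $Y$. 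The induced family is then locally countable in $Y$, contradicting its weak Lindel\"ofness, and $|X|\le 2^\omega$ follows.

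I expect the main obstacle to be exactly this last step: upgrading the pairwise disjoint family to a locally countable one inside the kernel by controlling the condensation points. This is what simultaneously forces the use of normality (to peel off the closed condensation set from the open support of the family) and the passage to a kernel (so that weak Lindel\"ofness can be applied where ambient weak Lindel\"ofness is unavailable), and a careless assignment on $X\setminus G$ lets a single condensation point in the kernel cover all the relevant stars and ruins the argument. An alternative repackaging is to prove instead that $X$ has countable extent and then quote the Ginsburg--Woods theorem \cite{GW} together with the fact that rank $2$ implies a $G_\delta$-diagonal; but the non-isolated points of a closed discrete set present precisely the same accumulation difficulty, so I regard the condensation-point analysis as the genuine crux either way.
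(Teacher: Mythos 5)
The first half of your argument is exactly the paper's: the equivalence $y\notin St^2(x,\mathcal{U}_n)\Leftrightarrow St(x,\mathcal{U}_n)\cap St(y,\mathcal{U}_n)=\emptyset$, the decomposition $[X]^2=\bigcup_n P_n$, and the Erd\H os--Rado extraction of an uncountable $S$ with pairwise disjoint stars are all correct and are where the rank $2$ hypothesis is spent. The second half, however, is a plan rather than a proof, and the step you yourself flag as ``the main obstacle'' is a genuine gap. Your neighbourhood assignment off $G$ (``a neighbourhood meeting as few of the $U_x$ as possible'') is not well defined and cannot do what you want: a point of $X\setminus G$ may have \emph{every} neighbourhood meeting uncountably many stars, and nothing in the dual hypothesis lets you prevent such a point from landing in the kernel $Y$. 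More fundamentally, the strategy of chasing condensation points out of $Y$ is not the right lever, because the kernel is handed to you by the dual property; you only control the assignment, not the membership of $Y$.

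The missing idea is the following chain, which sidesteps condensation points entirely. First, pairwise disjointness of $\{St(x,\mathcal{U}_{n_0}):x\in S\}$ implies that \emph{every} star $St(z,\mathcal{U}_{n_0})$, $z\in X$, meets $S$ in at most one point, so $S$ is closed and discrete. This permits the concrete assignment $\phi(x)=St(x,\mathcal{U}_{n_0})$ for $x\in S$ and $\phi(x)=X\setminus S$ for $x\notin S$; since the second kind of set misses $S$ and a star centred at a point of $S$ contains no other point of $S$, any kernel $Y$ must contain all of $S$ --- a much stronger and cleaner fact than ``$Y$ meets uncountably many stars.'' Normality is then applied to the disjoint closed sets $S$ and $X\setminus\bigcup_{x\in S}St(x,\mathcal{U}_{n_0})$ to produce an open $V$ with $S\subseteq V$ and $\overline{V}\subseteq\bigcup_{x\in S}St(x,\mathcal{U}_{n_0})$. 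Finally, the trace on $Y$ of the cover by the pairwise disjoint stars together with $X\setminus\overline{V}$ defeats weak Lindel\"ofness of $Y$: a countable subfamily covers only countably many points of $S$, and for an uncovered $s\in S\subseteq Y$ the set $V\cap St(s,\mathcal{U}_{n_0})$ is a neighbourhood of $s$ disjoint from $X\setminus\overline{V}$ and from every other star, so the subfamily's union is not dense in $Y$. Whether condensation points of the star family lie in $Y$ is irrelevant to this argument; what matters is that each $s\in S$ is insulated by $V\cap St(s,\mathcal{U}_{n_0})$. Your closing alternative (countable extent plus Ginsburg--Woods) founders for the reason you suspect: an arbitrary closed discrete set lacks the disjoint-star structure that makes the final cover work.
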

\begin{proof}  Let $\{\mathcal U_n:n<\omega\}$ be a sequence of
open covers of $X$ such that $\{x\}=\bigcap\{St^2(x,\mathcal
U_n):n<\omega\}$ for each $x\in X$. Assume by contradiction that
$|X|>2^\omega$ and for any $n<\omega$  put $P_n=\{\{x,y\}\in
[X]^2: St(x,\mathcal U_n)\cap St(y,\mathcal U_n)=\emptyset \}$.
The assumption that  the sequence $\{\mathcal U_n:n<\omega\}$ has
rank $2$ implies that $[X]^2=\bigcup\{P_n:n<\omega\}$. By Lemma
\ref{erdos} there exists an uncountable set $S\subseteq X$ and an
integer $n_0$  such that $[S]^2\subseteq P_{n_0}$. The collection $\{St(x,\mathcal
U_{n_0}):x\in S\}$ consists of pairwise disjoint open sets. From that it follows that, for any $z\in
X$, the set $St(z,\mathcal U_{n_0})$ cannot meet $S$ in two
distinct points, which implies that the set $S$ is closed and discrete.  

We define a neighbourhood assignment $\phi$ for $X$ as follows:
if $x\in S$ let $\phi(x)=St(x,\mathcal U_{n_0})$ and if $x\in
X\setminus S$ let $\phi(x)=X\setminus S$. Since $X$ is dually
weakly Lindel\"of, there exists a weakly Lindel\"of subspace $Y$
such that $X=\bigcup\{\phi(y):y\in Y\}$. By the way $\phi$ is
defined, it follows that $S\subseteq \bigcup\{\phi(y):y\in Y\cap
S\}$ and hence $S\subseteq Y$. As $X$ is normal, we may pick an open
set $V$ such that $S\subseteq V$ and $\overline V\subseteq
\bigcup\{St(x,\mathcal U_{n_0}):x\in S\}$.   The trace on $Y$ of
the open cover $\{St(x,\mathcal U_{n_0}):x\in X\}\cup
\{X \setminus \overline V\}$ witnesses the failure of the weak
Lindel\"of property on $Y$. This is a contradiction and we are
done. \end{proof}

Related results for the classes of dually ccc spaces and for that of cellular-Lindel\"of spaces were proved in \cite{cinesi} and \cite{BS}.

Finally we will construct a space with a diagonal of rank 2, cellularity at most continuum and cardinality larger than the continuum, thus solving Problem 2 from \cite{AB}. Recall that a $\kappa$-Suslin Line $L$ is a continuous linear order (endowed with the order topology) such that $c(L) \leq \kappa < d(L)$. The existence of a $\kappa$-Suslin Line for every $\kappa \geq \omega$ is consistent with ZFC (Jensen proved that it follows from $V=L$).

\begin{theorem}
($V=L$) There is a space $X$ with a diagonal of rank 2 such that $c(X) \leq \mathfrak{c}$ and $|X| \geq \mathfrak{c}^+$.
\end{theorem}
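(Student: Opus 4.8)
The plan is to realize the example on (the point set of) a $\mathfrak c$-Suslin line $L$, which exists under $V=L$ by Jensen's theorem. Starting from such an $L$ is natural because it settles two of the three requirements simultaneously: from $c(L)\le\mathfrak c<d(L)$ together with the general inequality $d(Y)\le|Y|$ we get $|L|\ge d(L)\ge\mathfrak c^+$, while $c(L)\le\mathfrak c$ holds by definition. Thus $L$ already decouples cardinality from cellularity in exactly the way the statement demands, and the whole problem reduces to installing a diagonal of rank $2$ on a suitable topology over the point set of $L$.

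Two obstructions dictate the shape of that topology. First, the order topology of $L$ itself is useless: a non-metrizable LOTS has no $G_\delta$-diagonal, and $L$ fails to be metrizable precisely because $c(L)<d(L)$; as a rank-$2$ diagonal is in particular a $G_\delta$-diagonal, the topology we use must be strictly finer than the order topology and cannot be orderable. Second, and more tellingly, the diagonal cannot be obtained in the usual way, namely by condensing onto a metric space: if $X$ were submetrizable, say $X$ condenses onto a metric space $M$, then every cellular family of $M$ is one of $X$, so $c(X)\ge c(M)=w(M)$, and a metric space of cardinality $\mathfrak c^+$ has weight $>\mathfrak c$ since $|M|\le w(M)^{\aleph_0}$ while $\mathfrak c^{\aleph_0}=\mathfrak c$; this would force $c(X)>\mathfrak c$. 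So the rank-$2$ diagonal must be genuinely intrinsic. The guiding picture is the reformulation already exploited in the proof of Theorem \ref{1}: a sequence $\{\mathcal U_n:n<\omega\}$ of open covers witnesses a rank-$2$ diagonal exactly when for every pair $x\ne y$ some $\mathcal U_n$ has $St(x,\mathcal U_n)\cap St(y,\mathcal U_n)=\emptyset$, and for each fixed $n$ a set $S$ on which this star-disjointness holds pairwise yields the cellular family $\{St(x,\mathcal U_n):x\in S\}$, so that $|S|\le c(X)$. The plan is therefore to manufacture the covers directly from the tree (equivalently, nested interval) structure underlying $L$, letting the cover elements overlap in the Suslin pattern so that pairwise star-disjoint sets behave like antichains and are automatically of size $\le\mathfrak c$, while $\bigcap_n St^2(x,\mathcal U_n)$ nonetheless collapses to $\{x\}$.

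The main obstacle is to meet these requirements at once, and a third constraint makes the balance delicate. On the one hand countably many covers must star-separate all $\mathfrak c^+$ pairs (to obtain rank $2$) while never producing a pairwise star-disjoint set of size $>\mathfrak c$ (to keep $c(X)\le\mathfrak c$, which is exactly where the Suslin property of the underlying tree is the essential input). On the other hand the diagonal must have rank exactly $2$ and not rank $3$: by Proposition 4.7 of \cite{BBR} a rank-$3$ diagonal would force $|X|\le c(X)^\omega=\mathfrak c^\omega=\mathfrak c$, collapsing the cardinality back below $\mathfrak c^+$. Hence the overlaps among cover elements must be loose enough that $2$-stars shrink to points, yet tight enough that $3$-stars do not, and this is precisely what leaves room for $\mathfrak c^+$ points. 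Once the covers are in place, verifying that the generated topology is Hausdorff, that $\bigcap_n St^2(x,\mathcal U_n)=\{x\}$, and above all that every cellular family has size $\le\mathfrak c$ is where essentially all the work lies; the cardinality and the cellularity lower bound are then read off directly from $L$.
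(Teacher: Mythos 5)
Your opening reduction is sound: under $V=L$ a $\mathfrak c$-Suslin line exists, and it delivers $c\leq\mathfrak c$ and cardinality $\geq\mathfrak c^+$ for free; your remarks on why the order topology cannot work and why the example cannot be submetrizable are also correct. But the proposal stops exactly where the proof has to start. You never construct the covers $\mathcal U_n$, never specify the topology they are supposed to generate, and never verify either $\bigcap_n St^2(x,\mathcal U_n)=\{x\}$ or the bound on cellular families --- and you say yourself that this is ``where essentially all the work lies.'' What you have written is an accurate description of the difficulty (countably many covers must star-separate all pairs of a set of size $\mathfrak c^+$ while never producing a pairwise star-disjoint set of size $>\mathfrak c$, and the diagonal must have rank exactly $2$ and not $3$), not a resolution of it. There is no indication of how ``manufacturing the covers from the tree structure'' would actually go, and it is far from clear that a bare-hands construction on the point set of $L$ succeeds: refining a topology can only increase cellularity, so the claim that star-disjoint sets ``behave like antichains'' is itself something that would need the very construction you omit.

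The missing idea is to outsource the rank-$2$ diagonal to known machinery rather than build it directly. The paper passes to the dense set $S$ of points of $L$ of countable cofinality (so $|S|\geq d(S)\geq d(L)>\mathfrak c$) and equips it with the topology generated by half-open intervals $(x,y]$. This space is regular and first countable (countable cofinality is used precisely to get countable local bases at each point), and its cellularity is still $\aleph_1$ because disjoint half-open intervals contain disjoint nonempty open intervals of the continuous order. Then Reed's Moore Machine \cite{DR} converts this first countable regular space into a Moore space of the same cardinality without increasing the cellularity, and Moore spaces carry a rank-$2$ diagonal automatically (Proposition 1.1 of \cite{ArBu}): the development supplies the covers $\mathcal U_n$, and the Suslin property is what keeps the cellularity of the output at $\aleph_1=\mathfrak c$. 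Those two steps --- the half-open-interval modification and the Moore Machine --- are exactly the content your proposal is missing; without them (or a genuine substitute) there is no proof.
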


\begin{proof}
Let $T$ be an $\omega_1$-Suslin Line. Let $S$ be the set of all points of $L$ which have countable cofinality. Since $T$ is a continuous linear order the set $S$ is dense in $T$ and hence $d(S)>\aleph_1$. In particular, $|S|>\aleph_1$. Let $\tau$ be the topology on $S$ generated by intervals of the form $(x,y]$, where $x< y \in S$. Note that $c((S, \tau))= \aleph_1$ and that the space $(S,\tau)$ is first-countable and regular. So applying Mike Reed's \emph{Moore Machine} (see, for example \cite{DR}) to $(S,\tau)$ we obtain a Moore space $\mathcal{M}(S)$ such that $|\mathcal{M}(S)|=|S| > \aleph_1$ and $c(\mathcal{M}(S)) \leq \aleph_1$. Recalling that Moore spaces have a diagonal of rank 2 (see Proposition 1.1 of \cite{ArBu}) and that $\mathfrak{c}=\aleph_1$ under $V=L$, we see that $X=\mathcal{M}(S)$ satisfies the statement of the theorem.
\end{proof}

The above theorem also shows that the assumption that the space is Baire is essential in Proposition 4.5 from \cite{BBR}, thus solving a question asked by the authors of \cite{BBR} (see the paragraph after the proof of Lemma 4.6).


\begin{thebibliography}{1}

\bibitem{AB} A.V. Arhangel'skii and A. Bella, \emph{The diagonal of a first-countable paratopological groups, submetrizability and related results}, Appl. Gen. Top. \textbf{8} (2007), 207--212.

\bibitem{ArBu} A.V. Arhangel'skii and R.Z. Buzyakova, \emph{The rank of the diagonal and submetrizability}, Comment. Math. Univ. Carolinae \textbf{47} (2006), 585--597.

\bibitem{bella} A. Bella, \emph{Remarks on the metrizability
degree}, Boll. Union. Mat. Ital., 1-3 (1987), 391--396.

\bibitem{BBR} D. Basile, A. Bella and G. J. Ridderbos, \emph{Weak
extent, submetrizability and diagonal degrees},  Houston  J.
Math., \textbf{40} (2014), 255--266.

\bibitem{BGW1978}
M.~Bell, J.~Ginsburg and G.~Woods, \emph{Cardinal inequalities for
topological spaces involving the weak Lindel\"of number}, Pacific
Journal of Mathematics 79 (1978), no.~1, 37--45.

\bibitem{BS} A. Bella and S. Spadaro, \emph{Cardinal invariants of cellular Lindel\"of spaces}, preprint, arXiv:1811.00660.

\bibitem{Bu} R. Buzyakova, \emph{Cardinalities of ccc spaces with regular $G_\delta$-diagonals}, Topology Appl. \textbf{153} (2006), 1696--1698.

\bibitem{Ch} J. Chaber, \emph{Conditions which imply compactness in countably compact spaces}, Bull. Acad. Pol. Sci. Ser. Math., \textbf{24} (1976), 993-998.

\bibitem{DR} E.K. van Douwen and M. Reed, \emph{On chain conditions in Moore spaces II}, Topology Appl. \textbf{39} (1991), 65--69.

\bibitem{GW} J. Ginsburg and R.G. Woods, \emph{A cardinal inequality for topological spaces involving closed discrete sets}, Proc. Amer. Math. Soc. \textbf{64} (1977), 357--360.

\bibitem {vanmill} J. van Mill, V. V. Tkachuk, R. G. Wilson,
\emph{Classes defined by stars and neighbourhood
assignments},Topology Appl.,  154-10 (2007), 2127--2134.


\bibitem{eng} R. Engelking, \emph{General Topology},
Heldermann Verlag, Berlin, second ed., 1989.


\bibitem{Sh} D. Shakhmatov, \emph{No upper bound for cardinalities of Tychonoff C.C.C. spaces with a $G_\delta$ diagonal exist (an answer to J. Ginsburg and R.G. Woods' question)}, Comment. Math. Univ. Carolinae \textbf{25} (1984), 731--746..

\bibitem{Sn} V. Sneider, \emph{Continuous images of Souslin and Borel sets: metrization theorems}, Dokl. Acad. Nauk USSR, \textbf{50} (1945), 77-79.

\bibitem{U} V. Uspenskij, \emph{A large $F_\sigma$-discrete Fr\'echet space having the Souslin property}, Comment. Math. Univ. Carolinae \textbf{25} (1984), 257--260.

\bibitem {cinesi} Wei-Feng Xuan and  Yan-Kui  Song, \emph{Dually
properties and cardinal inequalities}, Topology Appl., \textbf{234}
(2018), 1-6.

\bibitem{Ze} P. Zenor, \emph{On spaces with regular $G_\delta$-diagonals}, Pacific J. Math, \textbf{40} (1972), 959--963.



\end{thebibliography}
\end{document}